\newcommand{\Z}{\mathbb Z}
\newcommand{\C}{\mathbb C}
\newcommand{\F}{\mathbb F}
\newtheorem{thm}{Theorem}[section]
\newtheorem{lem}[thm]{Lemma}
\newtheorem{prop}[thm]{Proposition}
\newtheorem{rem}[thm]{Remark}
\newtheorem{ex}{Example}[section]
\DeclareMathOperator{\End}{End}
\DeclareMathOperator{\Sym}{Sym}
\DeclareMathOperator{\Ad}{Ad}
\DeclareMathOperator{\GL}{GL}
\DeclareMathOperator{\GO}{GO}
\DeclareMathOperator{\GSp}{GSp}
\DeclareMathOperator{\PGL}{PGL}
\DeclareMathOperator{\PSL}{PSL}
\DeclareMathOperator{\PSp}{PSp}
\DeclareMathOperator{\SL}{SL}
\DeclareMathOperator{\Sp}{Sp}
\DeclareMathOperator{\SU}{SU}
\DeclareMathOperator{\Ind}{Ind}
\DeclareMathOperator{\Res}{Res}
\DeclareMathOperator{\Hom}{Hom}
\DeclareMathOperator{\Gal}{Gal}
\DeclareMathOperator{\Frob}{Frob}
\DeclareMathOperator{\tr}{tr}
\begin{document}
\title{Irreducible adjoint representations in prime power dimensions, with an application}
\author{\sc{Liubomir Chiriac}}
\date{}
\maketitle
\begin{abstract}
We construct an infinite family of representations of finite groups with an irreducible adjoint action and we give an application to the question of lacunary of Frobenius traces in Galois representations.

\smallskip

2010 \emph{Mathematics Subject Classification}. 11F80, 11R45, 20C15.
\end{abstract}

\section{Introduction.}  Let $n\geq 2$ be an integer. Consider the representation $$\Ad:\GL_n(\C)\to \GL_{n^2-1}(\C)$$ obtained by composing the natural projection of $\GL_n(\C)$ onto $\PGL_n(\C)$ with the $(n^2-1)$-dimensional adjoint representation of $\PGL_n(\C)$. Henceforth, given a representation $\rho$, we shall to refer to $\Ad(\rho)$ as the adjoint of $\rho$. The main question we address is whether for every $n$ there exists a finite group $G$ and an $n$-dimensional irreducible $\C$-representation $(\rho,V)$ of $G$ such that $\Ad(\rho)$ is irreducible. We will then apply our conclusion to Galois representations with finite image.

\smallskip

For $n\leq 4$ we find all the groups $G$ with the required property using Blichfeldt's classifications (cf. \cite{Blich}) of the finite subgroups of $\PGL_n(\C)$ for $n\leq 4$. In Proposition 3.1 we show that if $n=2$ then $\Ad(\rho)=\Sym^2(\rho)\otimes \det(\rho)^{-1}$ is irreducible unless $\rho$ is of cyclic or dihedral type. In Proposition 3.2 we prove that for $n=3$, $\Ad(\rho)$ is irreducible in precisely four cases; two of which yield simple groups and the remaining two yield solvable groups. Similarly, Proposition 3.3 shows that for $n=4$ there are two instances when $\Ad(\rho)$ is irreducible. Note that for $n\geq 3$, $\Ad(\rho)$ will be reducible if $\rho$ were essentially self-dual. On the other hand, many examples of Galois representations coming from arithmetic geometry tend to be essentially self-dual.

\smallskip

If we restrict our attention to quasisimple groups $G$ (i.e., perfect central extensions of simple groups), then there is a complete answer in a paper of Magaard, Malle and Tiep \cite{MMT} to whether there are irreducible representations $\rho$ of $G$ with $\Ad(\rho)$ irreducible. As it turns out, the only infinite family of examples are the Weil representations for $\SU_{n}(\F_2)$ and $\Sp_{2n}(\F_3)$ of degrees $(2^n-(-1)^n)/3$ and $(3^n-(-1)^n)/2$, respectively (see also \cite{Mal}). This will not provide examples in the generic prime power case, forcing us to consider more general groups.

\smallskip

We also mention that if we do not require the irreducibility of $\Ad(\rho)$, then the prime power degree $\C$-representations $\rho$ of quasisimple groups have been classified, partially by Malle and Zalesskii \cite{Mall}, where they omit the alternating groups and their double covers, and completed by Balog, Bessenrodt, Olsson and Ono \cite{Ono} (for alternating groups) and Bessenrodt and Olsson \cite{Bess} (for their double covers). The generic examples of such representations are given by the Steinberg characters of finite groups of Lie type.

\smallskip

In general, following an outline of Michael Aschbacher, we construct an infinite family of representations of prime power degree with irreducible adjoint representations. In this case, the groups $G$ are taken to be the normalizers of certain extraspecial groups (the analogue of the Heisenberg groups over finite fields). More precisely, we establish the following:

\begin{thm} Let $p\geq 3$ be a prime, $r$ a positive integer, and $P$ an extraspecial group of order $p^{2r+1}$ and exponent $p$. Let $(\rho,V)$ be an irreducible $\C$-representation of degree $p^r$ of $P$, which is necessarily faithful. If $G$ is the normalizer in $\SL(V)$ of $P$ then $\Ad(\rho)$ is an irreducible representation of $G$.
\end{thm}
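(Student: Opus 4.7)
The plan is to decompose $\End(V)=V\otimes V^{*}$ into $P$-isotypic components and then show that $G$ permutes the nontrivial components transitively. Since $\rho$ is faithful and irreducible with central character $\zeta$, a standard commutator argument gives $\chi_\rho(g)=0$ for $g\in P\setminus Z$ (where $Z:=Z(P)$), while $\chi_\rho(z)=p^{r}\zeta(z)$ for $z\in Z$. Hence $|\chi_\rho|^{2}$ equals $p^{2r}$ on $Z$ and $0$ elsewhere, which agrees with $\sum_{\chi\in\widehat{P/Z}}\chi$ by orthogonality. Consequently, as $P$-modules,
\[
\End(V)\big|_{P}\;=\;\bigoplus_{\chi\in\widehat{P/Z}}V_{\chi},
\]
where each $V_{\chi}$ is a one-dimensional $P$-isotypic line with character $\chi$. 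The trivial character corresponds to $V_{1}=\C\cdot I$, so $\sln(V)=\bigoplus_{\chi\neq 1}V_{\chi}$ as $P$-modules.

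Next, I would leverage the normality of $P$ in $G$: any $G$-stable subspace of $\sln(V)$ is $P$-stable, and since the $V_{\chi}$ are pairwise non-isomorphic one-dimensional $P$-modules, such a subspace must be a direct sum $\bigoplus_{\chi\in S}V_{\chi}$ for some $S\subseteq\widehat{P/Z}\setminus\{1\}$. Thus the problem reduces to showing that $G$ acts transitively on the $p^{2r}-1$ nontrivial characters of $P/Z$.

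The conjugation action of $G$ on $P$ fixes $Z$ pointwise (because $\rho(Z)$ consists of scalars by Schur's lemma, hence lies in the center of $\GL(V)$ and a fortiori of $G$), and preserves the commutator pairing on $P/Z$, a non-degenerate alternating $\F_{p}$-form. This yields a homomorphism $G\to \Sp(P/Z)\cong \Sp_{2r}(\F_{p})$. I would then invoke the classical fact that, for $p$ odd and $P$ of exponent $p$, every symplectic automorphism of $P/Z$ lifts to an automorphism of $P$ fixing $Z$ pointwise; by the Stone--von Neumann-type uniqueness of the irreducible representation of $P$ with central character $\zeta$, any such lift is implemented by conjugation by an element of $\GL(V)$, which can be rescaled to lie in $\SL(V)$. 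Therefore $G\twoheadrightarrow \Sp_{2r}(\F_{p})$. Identifying $\widehat{P/Z}$ with $P/Z$ via the symplectic form, the induced action becomes the standard one, which is transitive on nonzero vectors by Witt's theorem, completing the proof.

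The main obstacle is verifying the surjection $G\twoheadrightarrow\Sp_{2r}(\F_{p})$; the hypotheses $p\geq 3$ and $P$ of exponent $p$ are essential there, as without them not every symplectic automorphism of $P/Z$ would lift to $P$ and be implementable in $\SL(V)$. Everything else is character-theoretic bookkeeping.
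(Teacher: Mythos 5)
Your proposal is correct and follows essentially the same route as the paper: decompose $\mathfrak{sl}(V)$ into one-dimensional weight lines for the conjugation action of $P/Z$, use Winter's theorem together with Stone--von Neumann uniqueness to show $G$ surjects onto $\Sp_{2r}(\F_p)$, and conclude by transitivity of that group on the nontrivial characters of $P/Z$. The only cosmetic difference is that you obtain the one-dimensionality of the weight spaces directly from the computation of $|\chi_\rho|^2$, whereas the paper first proves transitivity and then counts dimensions; both are fine.
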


Note that this gives new cases of prime power dimensions where $G$ is not quasisimple. Moreover, due to the geometric nature of the Heisenberg groups, we remark that for some values of $p$, the groups $G$ constructed above are known to have geometric interpretations. For instance, when $p=5$ and $r=1$, Decker \cite{Dec} has proved that the group $G$ is the full symmetry group of the Horrocks-Mumford bundle constructed in \cite{HM}, which is an indecomposable rank 2 bundle on $\mathbb{P}^4$. In addition, for $p=3$ and $r=1$, as noted in \cite{Dol}, the image of $G$ in $\PGL_3$ is the group of projective automorphisms preserving the one-dimensional linear system of plane cubic curves in $\mathbb{P}^2$ given by $$t_0(x^3+y^3+z^3)+t_1xyz=0, (t_0,t_1)\in \mathbb{P}^1.$$

\smallskip

Since every finite group appears as a Galois group over a number field $F$, our construction furnishes examples of non essentially self-dual representations of the absolute Galois groups $G_F$ (for suitable number fields). Given a continuous Galois $\C$-representation $\rho$ of $G_F$, for any finite place $v$ at which $\rho$ is unramified, consider the Frobenius class $\Frob_v$ attached to $v$, and denote by $a_v$ the trace of $\rho(\Frob_v)$. The interest in $\Ad(\rho)$ stems from the following result:

\begin{prop} Let $(\rho,V)$ be a continuous irreducible $n$-dimensional complex representation of the absolute Galois group $G_{F}$ of a number field $F$. Let $\sum_{i=1}^{N}m_i\sigma_i$ be the decomposition of $\Ad(\rho)$ into irreducible components $\sigma_i$, such that $\sigma_i\not\simeq \sigma_j$ if $i\neq j$, $1\leq i,j\leq N$. Then the density $\delta(\Sigma)$ of the set $\Sigma$ of places $\{v\text{ finite}~|~a_v=0\}$ satisfies $$\delta(\Sigma)\leq 1-\frac{1}{1+\sum_{i=1}^Nm_i^2}.$$ In particular, if $\Ad(\rho)$ is irreducible then $\delta(\Sigma)\leq 1/2$.
\end{prop}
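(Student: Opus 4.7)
The plan is to reduce the statement to a second-moment computation on the finite image of $\rho$, combined with a Cauchy–Schwarz inequality. Since $\rho$ is continuous with values in $\GL_n(\C)$, its image $H := \rho(G_F)$ is a finite group, and by the Chebotarev density theorem the Frobenius classes $\Frob_v$ equidistribute among the conjugacy classes of $H$. Thus $\delta(\Sigma)$ equals the proportion of $h \in H$ with $\chi(h) = 0$, where $\chi := \tr\rho$.

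The algebraic input is the isomorphism
\begin{equation*}
\rho \otimes \rho^\vee \;\cong\; \mathbf{1} \oplus \Ad(\rho),
\end{equation*}
valid because $\rho$ is irreducible, so the trivial representation appears with multiplicity one in $\rho \otimes \rho^\vee$ by Schur's lemma, and because the $\PGL_n$-adjoint pulled back to $\GL_n$ is exactly the trace-zero complement in $\End(V) \cong V \otimes V^\vee$. Taking characters yields the pointwise identity $|\chi(h)|^2 = 1 + \chi_{\Ad(\rho)}(h)$ for every $h \in H$.

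From here I would compute the first two moments of the class function $X(h) := |\chi(h)|^2$ via character orthogonality on $H$:
\begin{equation*}
\frac{1}{|H|}\sum_{h \in H} X(h) = \langle \chi,\chi\rangle = 1, \qquad \frac{1}{|H|}\sum_{h \in H} X(h)^2 = 1 + \sum_{i=1}^N m_i^2,
\end{equation*}
the latter because the $\sigma_i$ are pairwise non-isomorphic and none equals the trivial character (otherwise $\rho \otimes \rho^\vee$ would contain $\mathbf{1}$ twice). Applying Cauchy–Schwarz in the form
\begin{equation*}
1 \;=\; \frac{1}{|H|}\sum_{h:\, X(h)>0} X(h) \;\leq\; \Bigl(\tfrac{1}{|H|}\sum_{h} X(h)^2\Bigr)^{1/2} \Bigl(\tfrac{|\{h:X(h)>0\}|}{|H|}\Bigr)^{1/2},
\end{equation*}
and squaring gives $|\{h: X(h) > 0\}|/|H| \geq 1/(1+\sum m_i^2)$. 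Passing to complements and invoking Chebotarev once more identifies the left-hand side with $1 - \delta(\Sigma)$, producing the required inequality. The final assertion is the case $N = m_1 = 1$.

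I do not foresee a substantive obstacle: the computation is essentially the Rankin–Selberg style second-moment bound transplanted to the finite-group setting. The only step meriting care is confirming that the $\Ad(\rho)$ of the introduction — defined via the projection $\GL_n \twoheadrightarrow \PGL_n$ and the adjoint action on $\sln_n(\C)$ — coincides with the non-trivial summand of $\rho \otimes \rho^\vee$, so that the orthogonality calculation and the definition in the statement genuinely refer to the same representation.
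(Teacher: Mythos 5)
Your proposal is correct, and it reaches the bound by a genuinely different (and more elementary) route than the paper. The paper stays entirely in the language of Artin $L$-functions: it computes $-\mathrm{ord}_{s=1}L(\rho\otimes\rho^{\vee},s)=1$ and $-\mathrm{ord}_{s=1}L(\rho\otimes\rho^{\vee}\otimes(\rho\otimes\rho^{\vee})^{\vee},s)=1+\sum_i m_i^2$, converts these pole orders into asymptotics for the Dirichlet series $\sum_v |a_v|^2 q_v^{-s}$ and $\sum_v |a_v|^4 q_v^{-s}$ as $s\to 1^+$, and then applies Cauchy--Schwarz to those Dirichlet series against the indicator sequence $b_v$ of $v\notin\Sigma$, reading off the (Dirichlet) density from $\sum_v b_v q_v^{-s}\sim(1-\delta(\Sigma))\log\frac{1}{s-1}$. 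You instead pass immediately to the finite image $H=\rho(G_F)$ via Chebotarev, identify the two pole orders with the first and second moments $\langle|\chi|^2,\mathbf{1}\rangle=1$ and $\langle|\chi|^2,|\chi|^2\rangle=1+\sum_i m_i^2$ by character orthogonality, and run Cauchy--Schwarz as a finite sum over $H$. The underlying arithmetic is identical --- both arguments hinge on the multiplicity $1+\sum_i m_i^2$ of $\mathbf{1}$ in $(\rho\otimes\rho^{\vee})^{\otimes 2}$ and the same Cauchy--Schwarz trick --- but your version avoids the analytic bookkeeping (the estimate showing $\log L(\rho,s)\sim\sum_v a_v q_v^{-s}$, the treatment of higher prime powers and ramified places), while the paper's $L$-function formulation is the one that would survive generalization to Galois representations with infinite image, where there is no finite group to average over. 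Two small points worth making explicit if you write this up: the set $\{h\in H:\chi(h)=0\}$ is a union of conjugacy classes (so Chebotarev applies directly to it), and the finitely many ramified places have density zero and so do not affect $\delta(\Sigma)$.
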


Note that this is independent of the dimension $n$ of the representation $\rho$, which contrasts well with the sharp upper bound of $1-1/{n^2}$ given by Serre in \cite{Ser}, when no condition on $\Ad(\rho)$ is imposed.

\section{Preliminaries.} An initial observation shows that if $\rho^{\vee}$ is the dual representation of $\rho$ then $\rho\otimes \rho^{\vee}$ is an $n^2$-dimensional representation that contains the trivial representation. Moreover, by Schur's Lemma, $\rho$ is irreducible if and only if $\rho\otimes \rho^{\vee}$ contains the trivial representation $\mathbf{1}$ with multiplicity one. In fact, one knows that
$\End(\rho)\cong\rho\otimes \rho^{\vee}=\textbf{1}\oplus \Ad(\rho).$

\begin{lem} Given an irreducible representation $(\rho,V)$, its adjoint $\Ad(\rho)$ is irreducible if and only if both its symmetric square $\Sym^2(\rho)$ and its alternating square $\Lambda^2(\rho)$ are irreducible.
\end{lem}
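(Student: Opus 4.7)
The plan is to relate the decomposition of $\rho\otimes\rho^\vee$ to that of $\rho\otimes\rho$ via a character-theoretic identity. On one side, the discussion preceding the lemma gives $\rho\otimes\rho^\vee\cong\mathbf{1}\oplus\Ad(\rho)$, and since $\rho$ is irreducible, Schur's lemma guarantees that $\Ad(\rho)$ itself contains no trivial constituent. On the other side, there is the standard decomposition $\rho\otimes\rho\cong\Sym^2(\rho)\oplus\Lambda^2(\rho)$. The bridge between them will be the pointwise equality $|\chi_\rho(g)^2|^2 = |\chi_\rho(g)|^4 = |\chi_\rho(g)\overline{\chi_\rho(g)}|^2$, which on averaging over $g\in G$ yields
$$\langle \chi_\rho^2, \chi_\rho^2 \rangle \;=\; \langle \chi_\rho\overline{\chi_\rho}, \chi_\rho\overline{\chi_\rho} \rangle.$$

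Expanding each side according to the two decompositions above, I will obtain
$$\langle \Sym^2(\rho), \Sym^2(\rho) \rangle + 2\langle \Sym^2(\rho), \Lambda^2(\rho) \rangle + \langle \Lambda^2(\rho), \Lambda^2(\rho) \rangle \;=\; 1 + \langle \Ad(\rho), \Ad(\rho) \rangle.$$
From here the argument becomes a matter of extremal arithmetic. If $\Ad(\rho)$ is irreducible, the right-hand side equals $2$; combined with the bounds $\langle \Sym^2(\rho), \Sym^2(\rho) \rangle \geq 1$, $\langle \Lambda^2(\rho), \Lambda^2(\rho) \rangle \geq 1$, and $\langle \Sym^2(\rho), \Lambda^2(\rho) \rangle \geq 0$ (whose sum is already $2$), equality forces each to take its minimum value, so both $\Sym^2(\rho)$ and $\Lambda^2(\rho)$ are irreducible.

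For the converse, if both $\Sym^2(\rho)$ and $\Lambda^2(\rho)$ are irreducible, then the dimension mismatch $\dim \Sym^2(\rho) = n(n+1)/2 \neq n(n-1)/2 = \dim \Lambda^2(\rho)$ (for $n\geq 2$) prevents them from being isomorphic, hence $\langle \Sym^2(\rho), \Lambda^2(\rho) \rangle = 0$. Substituting into the identity gives $\langle \Ad(\rho), \Ad(\rho) \rangle = 1$, so $\Ad(\rho)$ is irreducible. The only non-mechanical step is the initial observation that $\chi_\rho^2$ and $\chi_\rho\overline{\chi_\rho}$, though generally distinct class functions, share the same $L^2$-norm; the remainder of the argument is bookkeeping of inner products together with the dimension comparison that rules out any shared constituent between $\Sym^2(\rho)$ and $\Lambda^2(\rho)$.
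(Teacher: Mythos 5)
Your proof is correct and is essentially the paper's argument in character-theoretic dress: the identity $\langle \chi_\rho^2,\chi_\rho^2\rangle=\langle \chi_\rho\overline{\chi_\rho},\chi_\rho\overline{\chi_\rho}\rangle$ is precisely the paper's observation that $(\rho\otimes\rho^{\vee})^{\otimes 2}$ and $(\rho\otimes\rho)\otimes(\rho\otimes\rho)^{\vee}$ are the same representation, whose trivial-isotypic multiplicity is then computed in two ways using $\rho\otimes\rho^{\vee}=\mathbf{1}\oplus\Ad(\rho)$ and $\rho\otimes\rho=\Sym^2(\rho)\oplus\Lambda^2(\rho)$. If anything, your version is slightly more careful in the converse, where you explicitly rule out a common constituent of $\Sym^2(\rho)$ and $\Lambda^2(\rho)$ by the dimension count $n(n+1)/2\neq n(n-1)/2$, a point the paper leaves implicit in its equality discussion.
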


\begin{proof} As noticed above, $\rho$ is irreducible if and only if $\textbf{1}$ is not contained in $\Ad(\rho)$, so $\Ad(\rho)$ is irreducible if and only if $\textbf{1}$ is not contained in $\Ad(\Ad(\rho))$. However, $\Ad(\rho)$ is readily seen to be self-dual, since $\rho \otimes \rho^{\vee}$ and $\mathbf{1}$ are. Therefore, $$\Ad(\rho)^{\otimes 2}=\Ad(\rho)\otimes \Ad(\rho)^{\vee}=\textbf{1}\oplus \Ad(\Ad(\rho)).$$ Consider $$\sigma=(\rho\otimes \rho^{\vee})\otimes (\rho \otimes \rho^{\vee}).$$ One one hand, we get $$\sigma=(\textbf{1}\oplus \Ad(\rho))^{\otimes 2}=\textbf{1}\oplus \Ad(\rho)^{\oplus 2}\oplus \Ad(\rho)^{\otimes 2}.$$ This shows that $\Ad(\rho)$ is irreducible if and only if $\textbf{1}$ is contained in $\sigma$ with multiplicity 2, i.e., $\dim \Hom_G(\textbf{1},\sigma)=2$.

On the other hand we can write $$\sigma=(\rho\otimes \rho)\otimes (\rho\otimes \rho)^{\vee}=(\Sym^2(\rho)\oplus \Lambda^2(\rho))\otimes (\Sym^2(\rho)\oplus \Lambda^2(\rho))^{\vee},$$ which implies that $\dim \Hom_G(\textbf{1},\sigma)$ is greater than or equal to $$\dim \Hom_G(\textbf{1},\Sym^2(\rho)\otimes(\Sym^2(\rho))^{\vee})+\dim \Hom_G(\textbf{1},\Lambda^2(\rho)\otimes(\Lambda^2(\rho))^{\vee})\geq 2.$$
The equality takes place if and only if both $\Sym^2(\rho)$ and $\Lambda^2(\rho)$ are irreducible, so the conclusion follows.
\end{proof}

Recall that a representation $\rho$ is said to be essentially self-dual if $\rho^{\vee}=\rho\otimes {\chi}$, for some character $\chi$. Note than when $\rho$ is essentially self-dual there is a direct sum decomposition $$\rho\otimes \rho^{\vee}=\Sym^2(\rho)\otimes \chi \oplus \Lambda^2(\rho)\otimes \chi.$$ Since $\dim \Lambda^2(\rho)=\frac{n(n-1)}{2}$ it follows that if $n\geq 3$ and $\rho$ is essentially self-dual then $\Ad(\rho)$ is reducible. Therefore, for $n\geq 3$ the irreducibility of $\Ad(\rho)$ forces $\rho$ to be non essentially self-dual (i.e., $\rho$ is not essentially self-dual).

\subsection{Primitivity.} Recall that an irreducible representation $(\rho,V)$ of $G$ is called \emph{imprimitive} if $V$ can be written as a direct sum of $m>1$ subspaces that are permuted by $G$ transitively. We call $V$ \emph{primitive}, if it is not imprimitive. It is known that if $H$ is a proper subgroup of $G$ and $(\sigma,W)$ is a representation of $H$ then the induced representation $\Ind_H^G(\sigma)$ is imprimitive and, conversely, any imprimitive representation of $G$ is induced (see, for e.g., \cite{Karp2}).

\begin{prop} Let $(\rho,V)$ be an irreducible representation of $G$ such that $\Ad(\rho)$ is irreducible. Then $(\rho,V)$ is primitive.
\end{prop}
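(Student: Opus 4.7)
I would argue the contrapositive: assuming $\rho$ is imprimitive, I will produce a proper nontrivial $G$-subrepresentation inside $\Ad(\rho)$. By the definition of imprimitivity there is a decomposition $V = W_1 \oplus \cdots \oplus W_m$ with $m \geq 2$ on which $G$ acts by permuting the summands transitively; in particular the $W_i$ all have the common dimension $n/m$, so $m$ divides $n$ and $m \leq n$.

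The idea is to exhibit an intermediate $G$-stable subspace in the chain $\mathbf{1} \subset \End(V) = \mathbf{1} \oplus \Ad(\rho)$. Using the block decomposition
\[
\End(V) = \bigoplus_{i,j=1}^m \Hom(W_i, W_j),
\]
the natural candidate is the ``block diagonal'' subspace $D = \bigoplus_{i=1}^m \End(W_i)$. Since $G$ acts on $\End(V)$ by conjugation and its action on $V$ permutes the $W_i$ according to some homomorphism $G \to S_m$, for $g \in G$ inducing the permutation $\pi$ we have $g \cdot \End(W_i) = \End(W_{\pi(i)})$. Hence $D$ is a $G$-subrepresentation of $\End(V)$.

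Now I would compare dimensions. Clearly $\dim D = m \cdot (n/m)^2 = n^2/m$, and since $n \geq 2$ and $2 \leq m \leq n$, we have $1 < n^2/m < n^2$. On the other hand, $D$ contains $\mathrm{id}_V$, which spans the $G$-invariant line $\mathbf{1}$ sitting inside $\End(V)$. Therefore $D$ splits as $D = \mathbf{1} \oplus (D \cap \Ad(\rho))$, and $D \cap \Ad(\rho)$ is a $G$-subrepresentation of $\Ad(\rho)$ of dimension $n^2/m - 1$, which is neither $0$ nor $\dim \Ad(\rho) = n^2 - 1$. This contradicts the irreducibility of $\Ad(\rho)$ and completes the argument.

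There is no real obstacle here: the only points to verify carefully are that $D$ is genuinely $G$-invariant (routine, once one tracks how $g \in G$ moves the summands $W_i$) and that the dimension $n^2/m$ lies strictly between $1$ and $n^2$ (immediate from $2 \leq m \leq n$ with $n \geq 2$). A possible alternative route would be to use Mackey's formula on $\rho \otimes \rho^{\vee} = \Ind_H^G(\sigma) \otimes \Ind_H^G(\sigma^{\vee})$, but the block-matrix description above is noticeably cleaner.
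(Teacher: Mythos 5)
Your argument is correct, but it takes a genuinely different route from the paper. The paper writes $\rho=\Ind_H^G(\tau)$ for a proper subgroup $H$, expands $\rho\otimes\rho^{\vee}$ via the push--pull formula and Mackey's decomposition theorem into a sum over $(H,H)$-double cosets, observes that the identity-coset summand $\Ind_H^G(\tau\otimes\tau^{\vee})$ is already reducible (it contains $\Ind_H^G(\mathbf{1}_H)$, which properly contains $\mathbf{1}_G$), and concludes that irreducibility of $\Ad(\rho)$ forces $|H\backslash G/H|=1$, i.e.\ $G=H$, a contradiction. You instead work directly with the system of imprimitivity $V=W_1\oplus\cdots\oplus W_m$ and exhibit the block-diagonal subalgebra $D=\bigoplus_i\End(W_i)$ as a $G$-stable subspace of $\End(V)$ of dimension $n^2/m$; since conjugation preserves the trace and $\tr(\mathrm{id}_V)=n\neq 0$, $D$ splits off the invariant line, and $D\cap\Ad(\rho)$ is a nonzero proper subrepresentation of $\Ad(\rho)$ of dimension $n^2/m-1$. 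This is essentially the "geometric" shadow of the $s=e$ Mackey summand, but it avoids induction and Mackey theory entirely, making the proof more elementary and self-contained; what it gives up is the finer information the Mackey computation provides (the full double-coset decomposition of $\rho\otimes\rho^{\vee}$, which the paper in fact reuses in the proof of Proposition 3.2 when identifying $\Ad(\rho)$ as an induced representation for $G_{72}$ and $G_{216}$). All the steps you flag as needing verification do check out: $g\cdot\Hom(W_i,W_j)=\Hom(W_{\pi(i)},W_{\pi(j)})$ under conjugation, and $1<n^2/m<n^2$ since $2\leq m\leq n$.
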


\begin{proof} Suppose that $V$ is imprimitive. Then $\rho=\Ind_{H}^G(\tau)$ for some proper subgroup $H$ of $G$, and a representation $\tau$ of $H$. As a result $$\rho\otimes \rho^{\vee}=\Ind_{H}^G(\tau)\otimes \Ind_{H}^G(\tau^{\vee})=\Ind_{H}^G\big(\Res_G^H(\Ind_H^G(\tau))\otimes\tau^{\vee}\big),$$ where for the second equality we have used the "push-pull" formula $\sigma_1\otimes \Ind \sigma_2=\Ind(\Res(\sigma_1)\otimes \sigma_2)$. By Mackey's Decomposition Theorem we know that $$\Res_G^H(\Ind_H^G(\tau))=\bigoplus_{s\in H\backslash G/H}\Ind_{H\cap sHs^{-1}}^H(\tau^s),$$ where $s$ runs through a set of representatives of $(H,H)$ double coset of $G$ and $\tau^s(h)$ is defined to be $\tau(s^{-1}hs)$, for $h\in H$. Hence $$\rho\otimes \rho^{\vee}=\bigoplus_{s\in H\backslash G/H}\Ind_H^{G}\big(\Ind_{H\cap sHs^{-1}}^H(\tau^s)\otimes \tau^{\vee}\big).$$
Using $s$ as the identity in the above summation, we note that the induction is reducible, so in order for $\Ad(\rho)$ to be irreducible there can be no more summands, i.e, $|H\backslash G/H|=1$. However, since $G$ is the disjoint union of $HsH$ for $s\in H\backslash G/H$, it follows that $G=H$, which contradicts the assumption that $H$ is a proper subgroup.
\end{proof}

\section{Low dimensions.}
\subsection{The case $n=2$.} Let $\mathfrak{sl}_2(\C)$ be the 3-dimensional Lie algebra of $2\times 2$ matrices with trace zero, and let
$$
\bigg\{
%\text{$h$=}
 \left( \begin{array}{cc}
1 & 0 \\
0 & -1 \end{array} \right) ,
%\text{~$e$=}
 \left( \begin{array}{ccc}
0 & 1 \\
0 & 0 \end{array} \right),
%\text{~$f$=}
 \left( \begin{array}{ccc}
0 & 0 \\
1 & 0 \end{array} \right)
\bigg\}
$$
be an ordered basis for $\mathfrak{sl}_2(\C)$. The group $\GL_2(\C)$ acts on $\mathfrak{sl}_2(\C)$ by conjugation, and the corresponding representation can be explicitly described (with respect to the chosen basis) as

\begin{displaymath}
\left( \begin{array}{cc}
a & b \\
c & d \end{array} \right) \mapsto
\text{~$\frac{1}{ad-bc}$}
\left( \begin{array}{ccc}
ad+bc & -ac & bd   \\
-2ab & a^2 &  -b^2\\
2cd & -c^2 & d^2 \end{array} \right).
\end{displaymath}
The kernel of this representation is composed only of the scalar matrices, so the representation factors through $\GL_2(\C)/\C^{\star}\cong \PGL_2(\C)$. Hence, the aforementioned representation is the 3-dimensional adjoint representation $\Ad:\GL_2(\C)\to \GL_3(\C).$ Moreover, $\GL_2(\C)$ also acts on the space of $2\times 2$ symmetric matrices, and this action with respect to the ordered basis
$$
\bigg\{
%\text{$h$=}
 \left( \begin{array}{cc}
0 & 1 \\
1 & 0 \end{array} \right) ,
%\text{~$e$=}
 \left( \begin{array}{ccc}
0 & 0 \\
0 & 1 \end{array} \right),
%\text{~$f$=}
 \left( \begin{array}{ccc}
1 & 0 \\
0 & 0 \end{array} \right)
\bigg\}
$$ is given by

\begin{displaymath}\text{$\Sym^2$:}
\left( \begin{array}{cc}
a & b \\
c & d \end{array} \right) \mapsto
\left( \begin{array}{ccc}
ad+bc & ac & bd   \\
2ab & a^2 & b^2\\
2cd & c^2 & d^2 \end{array} \right).
\end{displaymath}

\noindent Thus, in fact, $\Ad$ and $\Sym^2\otimes \det^{-1}$ are equivalent representations.

For an irreducible 2-dimensional representation $\rho:G\to \GL_2(\C)$, the projective image $\overline{G}$ is a finite subgroup of $\PGL_2(\C)\cong SO_3(\C)$. The group $SO_3(\C)$ has five finite subgroups: $C_n,D_n,A_4,S_4$ and $A_5$.

\begin{prop} Let $\rho:G\to \GL_2(\C)$ be an irreducible 2-dimensional representation of a finite group $G$, and let $\overline{G}$ be its projective image in $\PGL_2(\C)$. Then $\Ad(\rho)$ is irreducible if and only if $\overline{G}\cong A_4,S_4$ or $A_5$.
\end{prop}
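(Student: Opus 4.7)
My plan is to reduce the question to the irreducibility of the tautological $3$-dimensional representation of the projective image $\bar{G}$ viewed as a subgroup of $SO_3(\C)$, and then case-split on the five families of finite subgroups of $SO_3(\C)$. The key preliminary observation, already established in this subsection, is that $\Ad(\rho)\cong\Sym^2(\rho)\otimes\det(\rho)^{-1}$ and that the map $\Ad:\GL_2(\C)\to\GL_3(\C)$ factors through the classical isomorphism $\PGL_2(\C)\cong SO_3(\C)$. Consequently $\Ad(\rho)$ descends to a $3$-dimensional representation of $\bar{G}$ which is nothing other than the tautological inclusion $\bar{G}\hookrightarrow SO_3(\C)$, so the proposition is equivalent to asking for which $\bar{G}$ this natural representation is irreducible.

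For the two reducible cases, $\bar{G}\cong C_n$ and $\bar{G}\cong D_n$, the quickest argument is representation-theoretic: every irreducible representation of $C_n$ is $1$-dimensional, and every irreducible representation of $D_n$ has dimension at most $2$, so the $3$-dimensional $\Ad(\rho)$ cannot remain irreducible in either case. One can equivalently read this off geometrically, by noting that a cyclic rotation subgroup of $SO_3(\C)$ fixes its common axis pointwise, while a dihedral rotation subgroup fixes its principal axis setwise as a line; either way there is a $\bar{G}$-invariant $1$-dimensional subspace of $\C^3$.

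For the three irreducible cases $\bar{G}\in\{A_4,S_4,A_5\}$ I would argue geometrically that the tautological action on $\C^3$ admits no proper invariant subspace. Any such invariant subspace, passing to its orthogonal complement if necessary, would yield a $1$-dimensional subspace fixed by every element of $\bar{G}$, i.e.\ a common axis of rotation for the whole group. But each of these polyhedral groups---realized respectively as the rotation groups of the regular tetrahedron, cube/octahedron, and icosahedron---contains rotations about non-parallel axes, so no such common axis can exist, giving the required contradiction. As a sanity check one can inspect the character tables of $A_4$, $S_4$ and $A_5$: each contains at least one $3$-dimensional irreducible, and a trace computation matches it with the natural rotation representation.

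The only mildly delicate step in the whole argument is the first one, namely the identification of $\Ad(\rho)$ with the tautological embedding $\bar{G}\hookrightarrow SO_3(\C)$; this is confirmed directly from the explicit matrix formula given earlier in the subsection, after which the proposition follows from elementary facts about finite subgroups of $SO_3(\C)$.
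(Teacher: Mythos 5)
Your reduction to the five finite subgroups of $SO_3(\C)\cong\PGL_2(\C)$, and your treatment of the reducible cases $C_n$ and $D_n$ (no irreducible representations of dimension $3$), coincide with the paper's proof. The divergence is in the three polyhedral cases, where the paper argues purely by dimension counts against the lists of irreducible representations of $A_4$, $S_4$ and $A_5$, while you argue geometrically. That geometric step contains a genuine gap: a proper $\bar{G}$-invariant subspace of $\C^3$ (or its orthogonal complement with respect to the invariant form) gives a $1$-dimensional subspace that is \emph{invariant}, i.e.\ a common eigenline on which $\bar{G}$ acts through a character, not a subspace \emph{fixed pointwise}, and hence not automatically ``a common axis of rotation for the whole group.'' You yourself make exactly this distinction one paragraph earlier when you observe that $D_n$ fixes its principal axis only setwise; the same phenomenon could a priori occur for $A_4$, whose abelianization is $C_3$, so the order-$3$ elements could act on the invariant line by nontrivial cube roots of unity and the line would not be their rotation axis. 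As written, the ``non-parallel axes'' contradiction therefore does not apply. The argument is airtight only for $A_5$, which is perfect and so must act trivially on any invariant line.

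The gap is repairable: for $S_4$ restrict to the commutator subgroup $A_4$ and then to $V_4\subset A_4$, whose three commuting half-turns have the coordinate axes as their only simultaneous eigenlines, and these are permuted cyclically by the $3$-cycles, so no line is invariant; alternatively (and more in the spirit of the paper) use that $\Ad(\rho)$ is self-dual and does not contain the trivial character, so any decomposition into $1$-dimensional characters would pair each nontrivial character with its dual in equal multiplicity, which is impossible in odd dimension $3$; a dimension count then forces irreducibility. Your ``sanity check'' via character tables is in fact the complete argument and should be promoted to the main proof; the geometric discussion, as stated, should not bear the logical weight.
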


\begin{proof} We rule out $C_n$ and $D_n$ since these groups do not have 3-dimensional irreducible representations. The group $A_4$ has three 1-dimensional and one 3-dimensional irreducible representations. Thus $\Ad(\rho)$ is the unique irreducible 3-dimensional representations (as the trivial representation is not contained in $\Ad(\rho)$). The irreducibility of $\Ad(\rho)$ when $\overline{G}=S_4$ follows by restricting to the normal subgroup $A_4$.

Finally, $A_5$ has one 1-dimensional, two 3-dimensional, one 4-dimensional and one 5-dimensional irreducible representations. Hence any 3-dimensional representation, and in particular $\Ad(\rho)$, must be irreducible.
\end{proof}

\subsection{The case $n=3$.} Here we make use of Hambleton and Lee's modern geometric account (\cite{HamLee}) of Blichfeldt's classification for the finite subgroups of $\PGL_3(\C)$ (\cite{Blich}). Using the primitivity condition proved in the previous section, we can restrict our attention to the primitive subgroups only. There are three primitive simple subgroups of $\PGL_3(\C)$, namely: $A_5,\PSL_2(\F_7)$ and $A_6$. The remaining three primitive subgroups of $\PGL_3(\C)$ are all solvable. We describe them in terms of the following $3\times 3$ matrices in $\GL_3(\C)$: $S=diag(1,\omega,\omega^2)$ ($\omega^3=1$), $U=diag(\epsilon,\epsilon, \epsilon\omega)$ ($\epsilon^3=\omega^2$),
\begin{displaymath}
\text{~$T$=}
 \left( \begin{array}{ccc}
0 & 1 & 0 \\
0 & 0 & 1 \\
1 & 0 & 0 \end{array} \right),
\text{~$V$=~$\frac{1}{\sqrt{-3}}$}
 \left( \begin{array}{ccc}
1 & 1 & 1 \\
1 & \omega & \omega^2 \\
1 & \omega^2 & \omega \end{array} \right).
\end{displaymath}

With this notation, the primitive, solvable subgroups of $\PGL_3(\C)$ are: $G_{216}$, the Hessian group of order 216, generated by $S,T,V,U$; $G_{72} \triangleleft G_{216}$, the normal subgroup of order 72, generated by $S,T,V,UVU^{-1}$; $G_{36} \triangleleft G_{72}$, a normal subgroup of order 36, generated by $S,T,V$.

\begin{prop} Let $\rho:G\to \GL_3(\C)$ be an irreducible 3-dimensional representation of a finite group $G$, and let $\overline{G}$ be its projective image in $\PGL_3(\C)$. Then $\Ad(\rho)$ is irreducible if and only if $\overline{G}\cong \PSL_2(\F_7),A_6, G_{72}$ or $G_{216}$.
\end{prop}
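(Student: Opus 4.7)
The plan is to follow the template of Proposition 3.1: reduce to a short list of projective images and dispatch each case. By Proposition 2.2 we may assume $\rho$ is primitive, so $\overline G$ is a primitive finite subgroup of $\PGL_3(\C)$, which by Blichfeldt's classification as recalled above is one of $A_5$, $\PSL_2(\F_7)$, $A_6$, $G_{36}$, $G_{72}$, or $G_{216}$. Since the character of $\Ad(\rho)$ is $|\chi_\rho|^2-1$ and factors through $\overline G$, we must decide in each of these six cases whether $\langle|\chi_\rho|^2,|\chi_\rho|^2\rangle_{\overline G}$ equals $2$.

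For $\overline G=A_5$, self-duality of the 3-dimensional irreducibles together with the standard decomposition $\rho_3\otimes\rho_3\simeq\mathbf{1}\oplus\rho_3\oplus\rho_5$ gives $\Ad(\rho)\simeq\rho_3\oplus\rho_5$, so the adjoint is reducible. For $\overline G=\PSL_2(\F_7)$ (order 168), and for $\overline G=A_6$ (where $\rho$ is a representation of the triple cover $3.A_6$, while $\Ad(\rho)$ descends to $A_6$), I would tabulate $|\chi_\rho|^2-1$ on conjugacy classes and verify by a direct character-table computation that it coincides with the 8-dimensional irreducible (or, for $A_6$, one of the two 8-dimensional irreducibles) of $\overline G$, hence is irreducible.

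For the three solvable cases, the key observation is that each contains the image $\overline P\cong(\Z/3)^2$ of the Heisenberg subgroup $P\cong 3^{1+2}$ of $\SL_3(\C)$ generated by $S$, $T$, and the scalar $\omega I$. The representation $\rho|_P$ is the unique faithful 3-dimensional irreducible of $P$, and the center of $P$ acts trivially on $\rho\otimes\rho^\vee$; hence this tensor product is the regular representation of $\overline P$, so each of the nine characters of $\overline P$ occurs with multiplicity one. Removing the trivial character yields $\Ad(\rho)|_{\overline P}\cong\bigoplus_{\chi\neq\mathbf{1}}\chi$, and $\overline G/\overline P$ permutes these summands via its embedding into $\Sp_2(\F_3)\cong\SL_2(\F_3)$ (coming from the commutator pairing on $\overline P$). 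Consequently $\Ad(\rho)$ is irreducible if and only if this action is transitive on the eight nontrivial characters of $\overline P$.

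A short order count finishes the argument: $|\overline G/\overline P|$ equals $4$, $8$, $24$ for $\overline G=G_{36},G_{72},G_{216}$, realizing the chain $\Z/4\subset Q_8\subset\SL_2(\F_3)$. A cyclic group of order $4$ cannot act transitively on a set of $8$, so $\Ad(\rho)$ is reducible when $\overline G=G_{36}$. For $Q_8\subset\SL_2(\F_3)$, the involution $-I$ fixes no nonzero vector of $\F_3^2$, and order-$4$ elements satisfy $x^2+1=0$ (an irreducible polynomial over $\F_3$), so they also have no nonzero fixed vector; thus every non-identity element of $Q_8$ acts freely on $\F_3^2\setminus\{0\}$, and since both sets have cardinality $8$, the action is simply transitive. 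Hence $\Ad(\rho)$ is irreducible for $\overline G=G_{72}$, and \emph{a fortiori} for $\overline G=G_{216}$. The main obstacle will be the character-table verifications in the $\PSL_2(\F_7)$ and $A_6$ cases, together with confirming directly from the generators $S,T,V,U$ that $\overline G/\overline P$ really is $\Z/4$, $Q_8$, or $\SL_2(\F_3)$ in the three solvable cases.
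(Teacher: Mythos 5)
Your reduction to the six primitive subgroups via Proposition 2.2 and Blichfeldt matches the paper, and both arguments dispatch $A_5$ immediately (the paper simply notes $A_5$ has no $8$-dimensional irreducible, which is slightly quicker than your explicit tensor decomposition). From there the routes diverge. For $\PSL_2(\F_7)$ and $A_6$ the paper avoids the character-table computation you defer to: since the irreducible degrees are $1,3,3,6,7,8$ and $1,5,5,8,8,9,10$ respectively, $8$ cannot be written as a sum of degrees of nontrivial irreducibles except as $8$ itself, so the $8$-dimensional $\Ad(\rho)$, which omits the trivial constituent, is forced to be irreducible; you may want to adopt that shortcut. For the solvable groups the paper reads off the character tables of $G_{36}$, $G_{72}$, $G_{216}$ from Grimus--Ludl and argues by induction from subgroups, whereas you run the extraspecial-normalizer argument: decompose $\Ad(\rho)|_{\overline{P}}$ into the eight nontrivial characters of $\overline{P}\cong(\Z/3)^2$, each with multiplicity one, and test transitivity of $\overline{G}/\overline{P}$ on them. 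This is exactly the method of the paper's Section 4 (Theorem 1.1, with Remark 4.5 identifying $G_{216}$ as the $p=3$, $r=1$ case), and your refinement --- a group of order $4$ cannot act transitively on $8$ points, while $Q_8\subset\SL_2(\F_3)$ acts freely, hence simply transitively, on $\F_3^2\setminus\{0\}$ --- correctly settles $G_{36}$ and $G_{72}$ as well; it buys a uniform, table-free treatment and explains structurally why $G_{36}$ fails while $G_{72}$ already suffices. The one step you should not leave implicit is that the kernel of the conjugation action of $\overline{G}$ on $\overline{P}$ is exactly $\overline{P}$, so that $\overline{G}/\overline{P}$ genuinely embeds in $\Sp_2(\F_3)$ and your order counts $4$, $8$, $24$ identify the images; this follows from Schur's lemma applied to the irreducible restriction of $\rho$ to the Heisenberg subgroup together with the fact that an automorphism of $P$ trivial on both $Z$ and $P/Z$ is inner, exactly as in the paper's Lemma 4.4.
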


\begin{proof}
From the previous section we know that $A_5$ has no irreducible 8-dimensional representations. The group $\PSL_2(\F_7)$ has one 1-dimensional, two 3-dimensional, one 6-dimensional, one 7-dimensional and one 8-dimensional irreducible representations. The trivial representation is not contained in $\Ad(\rho)$, so $\Ad(\rho)$ is irreducible. Finally, $A_6$ has one 1-dimensional, two 5-dimensional, two 8-dimensional, one 9-dimensional, and one 10-dimensional irreducible representations. Thus $\Ad(\rho)$ is irreducible in this case.

For the following three solvable groups we use their character tables from \cite{GrimLudl}. The group $G_{36}$ has four 1-dimensional representations and two 4-dimensional irreducible representations, hence $\Ad(\rho)$ must be the sum of the two 4-dimensional irreducible representations. The character table of $G_{72}$ shows that it has four 1-dimensional representations, one 2-dimensional irreducible representation and one 8-dimensional irreducible representation. By the aforementioned arguments, $\Ad(\rho)$ is induced from either of the two 4-dimensional representations of the normal subgroup $G_{36}$ and therefore $\Ad(\rho)$ is irreducible. Likewise, $G_{216}$ has four 1-dimensional representations, one 2-dimensional representation, eight 3-dimensional representations, two 6-dimensional representations, and one 8-dimensional representation. It is not hard to see that $\Ad(\rho)$ is induced from non-normal extensions of degrees 4 and 8 and it is irreducible.
\end{proof}

\subsection{The case $n=4$.} We refer to the list of the finite primitive subgroups of $\PGL_4(\C)$ given by Blichfeldt in \cite{Blich}. The simple groups on this list are $A_5,A_6,A_7,\PSL_2(\F_7)$ and $\PSp_4(\F_3)$. We remark that all the solvable groups on the list are mapped by $\rho$ into either $\GO_4(\C)$ or $\GSp_4(\C)$, i.e., $\rho$ is either orthogonal or symplectic.

\begin{prop} Let $\rho:G\to \GL_4(\C)$ be an irreducible 4-dimensional representation of a finite group $G$, and let $\overline{G}$ be its projective image in $\PGL_4(\C)$. Then $\Ad(\rho)$ is irreducible if and only if $\overline{G}\cong A_7$ or $\PSp_4(\F_3)$.
\end{prop}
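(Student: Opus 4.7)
The plan is to follow the structure of the proofs of Propositions 3.1 and 3.2. By Proposition 2.2, we may assume $\rho$ is primitive, so $\overline{G}$ is one of the primitive finite subgroups of $\PGL_4(\C)$ listed by Blichfeldt. Since the adjoint action is trivial on scalars, $\Ad(\rho)$ descends to a $15$-dimensional representation of $\overline{G}$, and the question reduces to deciding when this representation is irreducible.

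First I would eliminate the solvable subgroups on Blichfeldt's list. By the remark preceding the statement, for each such $\overline{G}$ the representation $\rho$ is either orthogonal or symplectic, hence essentially self-dual. The observation at the end of Section~2 (valid for $n \geq 3$) then forces $\Ad(\rho)$ to split as a sum involving the twisted symmetric and exterior squares of $\rho$ (of dimensions $10$ and $6$, one of them containing the trivial constituent), which cannot recombine into a single irreducible piece. Thus we are reduced to the five simple candidates $A_5$, $A_6$, $A_7$, $\PSL_2(\F_7)$, and $\PSp_4(\F_3)$.

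For the simple case I would consult the character tables. The groups $A_5$, $\PSL_2(\F_7)$, and $A_6$ possess no irreducible character of degree as large as $15$ (the maxima being $5$, $8$, and $10$ respectively), so the $15$-dimensional representation $\Ad(\rho)$ must be reducible in these cases. For $A_7$ and $\PSp_4(\F_3)$ I would verify irreducibility by a dimension accounting: each group possesses an irreducible character of degree exactly $15$, and using the fact that $\Ad(\rho)$ does not contain the trivial representation, one checks that no decomposition of a $15$-dimensional representation into other irreducible constituents is compatible with their character degrees. Hence $\Ad(\rho)$ must coincide with the $15$-dimensional irreducible representation.

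The principal obstacle is this final character-theoretic step: confirming for both $A_7$ and $\PSp_4(\F_3)$ that no sum of small irreducible degrees (excluding the trivial) totals $15$. Once that verification is in hand, $\Ad(\rho)$ is forced to equal the $15$-dimensional irreducible, and the proposition follows.
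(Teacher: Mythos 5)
Your reduction to the primitive subgroups, your elimination of the solvable cases via (essential) self-duality, and your treatment of $A_5$, $A_6$, $\PSL_2(\F_7)$ and $A_7$ all match the paper. For $A_7$ the dimension accounting does close: the nontrivial character degrees are $6,10,10,14,14,15,21,35$, and no sum of these other than a single $15$ equals $15$, so $\Ad(\rho)$ is forced to be the unique $15$-dimensional irreducible.

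The genuine gap is in the $\PSp_4(\F_3)$ case, which is exactly the step you flag as the ``principal obstacle'': that obstacle cannot be overcome by degree bookkeeping. The group $\PSp_4(\F_3)\cong U_4(2)$ has nontrivial irreducible characters of degrees $5$, $6$, $10$, $15$, \dots (the degree-$5$ characters come from the odd Weil representation of $\Sp_4(\F_3)$, which factors through the quotient), so $15=5+10$ and $15=5+5+5$ are perfectly admissible decompositions of a $15$-dimensional representation avoiding the trivial character. Hence the mere existence of a $15$-dimensional irreducible does not force $\Ad(\rho)$ to be irreducible, and your argument stalls precisely here. The paper avoids this by not doing a character count at all for this group: it invokes the result of Magaard--Malle--Tiep (cited in the introduction) that the adjoint of the Weil representation of $\Sp_{2n}(\F_3)$ of degree $(3^n-(-1)^n)/2$ is irreducible, which for $n=2$ gives the degree-$4$ representation with $\overline{G}\cong \PSp_4(\F_3)$. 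To repair your version without that citation you would need finer information than degrees --- for instance computing the character of $\Ad(\rho)$ on enough classes to evaluate $\langle \chi_{\Ad(\rho)},\chi_{\Ad(\rho)}\rangle$, or ruling out the degree-$5$ and degree-$10$ constituents by an explicit inner product.
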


\begin{proof} We first show that if $\rho$ is either orthogonal or symplectic then $\rho$ is self-dual. Indeed, given an action of $G$ on a quadratic (or symplectic) vector space $W$ we have an action of $G$ on the dual space $W^{\vee}$ given by $(gf)(x)=f(g^{-1}x)$, for all $g\in G, f\in W^{\vee}, x\in W$. Any bilinear form $\beta:W\times W\to \C$ gives an isomorphism $\phi:W\to W^{\vee}$ mapping $x\mapsto (\phi_x:y\mapsto \beta(x,y))$, $y\in W$. Since $\beta(gx,gy)=\beta(x,y)$ it follows that $\phi$ is a $G$-equivariant isomorphism and hence the action of $G$ is self-dual. However, we know from section 2 that $\Ad(\rho)$ must be reducible in this case. Thus, $\Ad(\rho)$ cannot be irreducible if $\overline{G}$ is solvable.

The three simple groups, $A_5,A_6$ and $PSL_2(\F_7)$ have no irreducible 15-dimensional representations. The group $A_7$ has one 1-dimensional, one 6-dimensional, two 10-dimensional, two 14-dimensional, one 15-dimensional, one 21-dimensional and one 35-dimensional irreducible representations, so in this case $\Ad(\rho)$ is irreducible. Finally, as mentioned in the introduction, the adjoint of the Weil representation of $\Sp_{2n}(\F_3)$ is irreducible. In particular, if $\overline{G}\cong \PSp_4(\F_3)$ then $\Ad(\rho)$ is irreducible.
\end{proof}

\section{An infinite family for $n=p^r$.} Let $p$ be an odd prime and $r$ a positive integer. The object of this section is to construct an infinite family of groups $G$ with irreducible representations $(\rho,V)$ of degree $n=p^r$, whose adjoint $\Ad(\rho)$ is irreducible. In this case, the group $G$ will arise as the normalizer in $\SL(V)$ of a suitable extraspecial $p$-group $P$.

Recall that $P$ a finite $p$-group is called \emph{extraspecial} if its center $Z(P)$ is a cyclic group of order $p$ which coincides with the commutator group $[P,P]$, such that $P/Z(P)$ is elementary abelian. Up to isomorphism there are two nonabelian extraspecial groups of order $p^3$. One isomorphism class is represented by the Heisenberg group of $3\times 3$ upper-triangular matrices over $\F_p$ with 1's on the main diagonal. The second isomorphism class is represented by the group of maps $\Z/p^2\Z \to \Z/p^2\Z$ of the form $x\mapsto ax+b$, where $a\equiv 1$ modulo $p$, and $b\in \Z/p^2\Z$. The Heisenberg group has exponent $p$, while the second group has exponent $p^2$. We will be mainly interested in the former class of groups.

Below we collect some classical results about the extraspecial groups. Further properties of these groups can be found in \cite{Asch}.

\begin{lem} Let $P$ be an extraspecial $p$-group. Then there exists $r\geq 1$ such that $|P|=p^{2r+1}$ and $P$ is the central product of $r$ non-abelian subgroups of order $p^3$, i.e., there exist normal subgroups $N_1,\ldots, N_r$ such that (i) $P=N_1\ldots N_r$; (ii) $[N_i,N_j]=1$, whenever $i\neq j$; (iii) $N_1\ldots N_{i-1}N_{i+1}\ldots N_r \cap N_i=Z$, $\forall$ $i$.
\end{lem}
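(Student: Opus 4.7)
The plan is to equip $V := P/Z$ with the structure of a non-degenerate symplectic $\F_p$-vector space via the commutator map, decompose $V$ into an orthogonal sum of hyperbolic planes, and lift this decomposition to the required central product.

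I would begin by setting $Z = Z(P)$; by hypothesis $|Z|=p$, $Z = [P,P]$, and $V = P/Z$ is elementary abelian of order $p^m$ for some $m\geq 1$. Fixing an isomorphism $Z \cong \F_p$, the commutator $(x,y)\mapsto [x,y]$ descends to a well-defined map $c:V\times V \to \F_p$, because $[zx,y] = [x,zy] = [x,y]$ whenever $z\in Z$. Since $[P,P]\subseteq Z$ is central, the identities $[xx',y] = [x,y][x',y]$ and $[x,yy'] = [x,y][x,y']$ hold without conjugation corrections, and combined with $[x,x]=1$ this shows that $c$ is an alternating $\F_p$-bilinear form on $V$.

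Next I would verify that $c$ is non-degenerate: $\bar{x}$ lies in the radical of $c$ if and only if $x$ centralizes $P$, i.e.\ $x\in Z(P) = Z$, in which case $\bar{x} = 0$. A non-degenerate alternating form forces $m$ to be even, so $m = 2r$ and $|P| = p^{2r+1}$. The standard classification of symplectic spaces over $\F_p$ then yields an orthogonal decomposition $V = H_1 \perp H_2 \perp \cdots \perp H_r$ into hyperbolic planes.

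For the central product structure, I would let $N_i$ denote the preimage of $H_i$ in $P$ under the projection $P\to V$. Each $N_i$ has order $p^3$, contains $Z$, and is non-abelian since $c|_{H_i}\neq 0$. Condition (ii) $[N_i,N_j]=1$ for $i\neq j$ is immediate from $c(H_i,H_j)=0$; condition (i) $P = N_1\cdots N_r$ follows because the $H_i$ span $V$; condition (iii) follows by projecting to $V$, where the sum $H_1+\cdots+H_{i-1}+H_{i+1}+\cdots+H_r$ meets $H_i$ only in $0$, forcing the group-theoretic intersection into $Z$, with the reverse inclusion trivial since $Z\subseteq N_j$ for every $j$. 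Normality of each $N_i$ in $P$ is then automatic from (ii): $[P,N_i] = [N_1\cdots N_r,N_i] \subseteq [N_i,N_i]\cdot Z \subseteq N_i$. The main obstacle is the first step, namely setting up the commutator pairing and confirming that it genuinely descends to a non-degenerate alternating bilinear form on $V$; once that is in place, the symplectic classification delivers the decomposition and the remainder is structural bookkeeping.
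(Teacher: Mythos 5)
Your proposal is correct and follows essentially the same route as the paper: both endow $P/Z$ with the nondegenerate alternating form induced by the commutator, split it into hyperbolic planes (the paper phrases this as choosing a symplectic basis), and take the $N_i$ to be the preimages of those planes. You simply spell out the verifications (bilinearity, non-degeneracy, conditions (i)--(iii), normality) that the paper leaves as routine.
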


\begin{proof}
Denote by $Z$ the center $Z(P)$ of $P$, and let $z$ be a generator of this cyclic group of order $p$. Then $P/Z$ can be naturally viewed as a vector space over $\F_p$, the finite field of integers modulo $p$. We define a map $$\beta:P/Z\times P/Z \to Z$$ as follows: for any pair $x,y\in P$, if their commutator $[x,y]=x^{-1}y^{-1}xy$ equals $z^a$ (with $0\leq a \leq p-1$), we set $\beta(xZ,yZ)=a$. It is easy to check that $\beta$ is nondegenerate, bilinear and skew-symmetric, which gives $P/Z$ the structure of a nondegenerate symplectic space over $\F_p$. Then $P/Z$ has a \emph{symplectic basis} $\overline{x_1},\overline{y_1},\dots,\overline{x_r},\overline{y_r}$ such that: $\beta(\overline{x_i},\overline{y_i})=1$, $\beta(\overline{x_i},\overline{y_j})=0$ for $i\neq j$, and $\beta(\overline{x_i},\overline{x_j})=\beta(\overline{y_i},\overline{y_j})=0$ (for all $1\leq i,j \leq r$). Hence $P/Z=\oplus \overline{N_i},\text{ where $\overline{N_i}=\langle \overline{x_i},\overline{y_i}\rangle$}.$ Letting $N_i$ be the preimages of $\overline{N_i}$ in $P$ we obtain the desired characterization for $P$.
\end{proof}

\begin{lem}\emph{(\cite{Karp1})} Let $P$ be an extraspecial group of order $p^{2r+1}$. Then $P$ has exactly $p^{2r}+p-1$ inequivalent irreducible representations over $\C$, of which $p^{2r}$ are representations of degree 1 and $p-1$ are faithful representations of degree $p^r$, which are completely determined by their restriction on $Z$.
\end{lem}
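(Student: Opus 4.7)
The plan is to combine a conjugacy class count with an explicit Mackey-theoretic construction: induce linear characters from a maximal abelian subgroup of $P$ to produce the higher-dimensional irreducibles, one for each nontrivial character of $Z$.

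I would first count conjugacy classes. Each $z\in Z$ forms a singleton class, contributing $p$ classes. For $g\notin Z$, the image $\overline{g}\in P/Z$ is a nonzero vector of the symplectic space from Lemma 4.1, so $C_P(g)$ projects onto the hyperplane $\beta(\overline{x},\overline{g})=0$; hence $[P:C_P(g)]=p$. Thus each non-central class has size $p$, giving $(p^{2r+1}-p)/p=p^{2r}-1$ further classes, for a total of $p^{2r}+p-1$ irreducibles. Since $[P,P]=Z$, the abelianization $P/Z$ has order $p^{2r}$, yielding exactly $p^{2r}$ linear characters of $P$ --- precisely the irreducibles with trivial restriction to $Z$. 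It remains to produce, for each of the $p-1$ nontrivial characters $\chi$ of $Z$, a unique irreducible representation of $P$ with central character $\chi$ and to show its degree is $p^r$.

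For existence, I choose a maximal abelian subgroup $A$ of $P$ containing $Z$; equivalently $A/Z$ is a Lagrangian subspace of $P/Z$, so $|A|=p^{r+1}$ and $[P:A]=p^r$. Since $[P,A]\subseteq[P,P]=Z\subseteq A$, the subgroup $A$ is normal in $P$. I extend $\chi$ arbitrarily to a linear character $\psi$ of $A$ and set $\sigma=\Ind_A^P(\psi)$, of degree $p^r$. To verify $\sigma$ is irreducible via Mackey's criterion, I would show that for any $g\in P\setminus A$, there exists $a\in A$ with $\beta(\overline{g},\overline{a})\neq 0$ (this uses that $A/Z$ is \emph{maximal} isotropic); then $c:=a^{-1}gag^{-1}$ is a nontrivial element of $Z$, so $\psi^g(a)=\psi(gag^{-1})=\psi(a)\chi(c)^{-1}\neq\psi(a)$. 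Thus $\psi^g\neq\psi$ on $A$, confirming irreducibility.

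For uniqueness and faithfulness, let $\rho$ be any irreducible of $P$ with central character $\chi$. Restricting to the abelian group $A$ decomposes $\rho|_A$ into linear characters, each extending $\chi$ to $A$; fixing such a $\psi$, Frobenius reciprocity exhibits $\rho$ as a constituent of the irreducible $\Ind_A^P(\psi)$, so $\rho\cong\Ind_A^P(\psi)$ and $\dim\rho=p^r$. In particular $\rho$ depends only on $\chi=\rho|_Z$, giving the final assertion. For faithfulness, any nontrivial normal subgroup $N$ of $P$ must contain $Z$: an $n\in N\setminus Z$ admits some $g\in P$ with $[g,n]\neq 1$, yielding a nontrivial central element of $N$. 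Hence when $\chi$ is nontrivial, $\ker\rho=\{1\}$. The main technical obstacle is the Mackey step, whose success rests precisely on the nondegeneracy of $\beta$ established in Lemma 4.1.
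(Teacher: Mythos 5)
Your proof is correct, and it is worth noting that the paper itself offers no argument for this lemma at all: it is stated as a quoted result with a citation to Karpilovsky, so any proof you give is necessarily "a different route." What you have written is the standard finite Stone--von Neumann argument, and all the pieces check out: the class count ($p$ central singletons plus $(p^{2r+1}-p)/p=p^{2r}-1$ classes of size $p$, since $C_P(g)$ is the preimage of the hyperplane $\beta(\cdot,\overline g)=0$), the identification of the $p^{2r}$ linear characters with the irreducibles trivial on $Z=[P,P]$, the Mackey-criterion verification that $\Ind_A^P(\psi)$ is irreducible (which correctly isolates where maximal isotropy of $A/Z$ and nontriviality of $\chi$ on the prime-order group $Z$ are used), and the faithfulness via $N\cap Z\neq 1$ for normal $N\neq 1$. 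The one place where your logic is compressed is the final uniqueness claim: showing that every irreducible $\rho$ with central character $\chi$ is isomorphic to $\Ind_A^P(\psi)$ for \emph{some} extension $\psi$ of $\chi$ occurring in $\rho|_A$ does not by itself show that $\rho$ "depends only on $\chi$," since a priori different extensions could induce nonisomorphic representations. This gap is closed in either of two ways already available to you: (i) all $p^r$ extensions of a fixed nontrivial $\chi$ to $A$ are $P$-conjugate (the map $P/A\to\widehat{A/Z}$, $g\mapsto(\overline a\mapsto\chi([g,a]))$, is injective by maximal isotropy, hence bijective by cardinality), so all the induced representations coincide; or (ii) pigeonhole, since your class count gives exactly $p-1$ nonlinear irreducibles and your existence step realizes each of the $p-1$ nontrivial central characters at least once. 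With that sentence added, the proof is complete.
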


\begin{lem}\emph{(\cite{Wint})} Let $P$ be an extraspecial group of order $p^{2r+1}$ and exponent $p$. If $\tilde{G}=\{\alpha\in \text{Aut}(P):\alpha|_{Z}=1\}$ and Inn$(P)$ is the group of inner automorphisms then
$\tilde{G}/\text{Inn}(P)\cong \Sp_{2r}(\F_p).$
\end{lem}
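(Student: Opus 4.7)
The plan is to build a homomorphism $\Phi:\tilde{G}\to \Sp_{2r}(\F_p)$, identify its kernel with $\text{Inn}(P)$, and prove surjectivity using the exponent-$p$ hypothesis. To define $\Phi$, I would reuse the commutator pairing from the previous lemma's proof: fix a generator $z\in Z$, identify $Z\cong \F_p$, and use that $\beta(\bar x,\bar y)$, defined by $[x,y]=z^{\beta(\bar x,\bar y)}$, endows $P/Z$ with a nondegenerate symplectic form. Any $\alpha\in \tilde G$ descends to $\bar\alpha\in \GL(P/Z)$, and the identity $\beta(\bar\alpha \bar x,\bar\alpha \bar y)=\alpha([x,y])=[x,y]=\beta(\bar x,\bar y)$ (using $\alpha|_Z=1$) places $\bar\alpha$ in $\Sp_{2r}(\F_p)$, giving the sought group homomorphism $\Phi$.

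For the kernel, the inclusion $\text{Inn}(P)\subseteq \ker\Phi$ is immediate: inner automorphisms fix $Z=Z(P)$ pointwise and act trivially on $P/Z$ (since $ghg^{-1}=h\cdot [h^{-1},g^{-1}]$ lies in $hZ$). Conversely, any $\alpha\in\ker\Phi$ satisfies $\alpha(g)g^{-1}\in Z$ for all $g\in P$, and the resulting map $f:P\to Z$ is forced to be a group homomorphism (since $Z$ is central) with $f|_Z=1$, so $f\in \Hom(P/Z,Z)$ and $|\ker\Phi|\leq p^{2r}$. Since $|\text{Inn}(P)|=|P/Z(P)|=p^{2r}$ as well, equality $\ker\Phi=\text{Inn}(P)$ follows.

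Surjectivity is the crux and is where the exponent-$p$ hypothesis does its work. Fix the symplectic basis $\bar x_i,\bar y_i$ of $P/Z$ furnished by Lemma 4.1 and pick lifts $x_i,y_i\in P$. Because $P$ has exponent $p$, the generators $z,x_i,y_i$ with relations $z^p=x_i^p=y_i^p=1$, $[x_i,y_i]=z$, $z$ central, and all remaining commutators among the generators trivial, form a presentation of $P$. Given $\sigma\in \Sp_{2r}(\F_p)$, I would pick any preimages $u_i,v_i\in P$ of $\sigma(\bar x_i),\sigma(\bar y_i)$, observe that $u_i^p=v_i^p=1$ by the exponent hypothesis and that $[u_i,v_j]=z^{\beta(\sigma\bar x_i,\sigma\bar y_j)}=z^{\beta(\bar x_i,\bar y_j)}$ because $\sigma$ is symplectic, so all defining relations remain valid, and extend $x_i\mapsto u_i$, $y_i\mapsto v_i$, $z\mapsto z$ to an endomorphism of $P$. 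Its image contains a generating set, so by finiteness of $P$ it is an automorphism in $\tilde G$ mapping to $\sigma$ under $\Phi$.

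The main obstacle is precisely this surjectivity step, where the exponent-$p$ hypothesis is indispensable: without it the chosen lifts need not satisfy $u_i^p=1$ and the naive extension breaks down; indeed, for the other extraspecial isomorphism class (of exponent $p^2$), $\tilde G/\text{Inn}(P)$ realizes only a proper subgroup of $\Sp_{2r}(\F_p)$. The construction of $\Phi$ and the kernel computation are, by contrast, formal consequences of $Z=[P,P]=Z(P)$ and require no hypothesis on the exponent.
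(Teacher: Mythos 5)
The paper does not prove this lemma at all: it is stated as a quoted result with a citation to Winter \cite{Wint}, and the surrounding text never revisits the argument. Your proposal therefore goes beyond the paper by supplying a complete, self-contained proof, and the proof you give is correct and is essentially the standard (Winter-style) argument: the commutator pairing yields $\Phi:\tilde G\to\Sp_{2r}(\F_p)$; the kernel is pinned down by the counting argument $\mathrm{Inn}(P)\subseteq\ker\Phi$ together with the injection $\alpha\mapsto(g\mapsto\alpha(g)g^{-1})$ of $\ker\Phi$ into $\Hom(P/Z,Z)$, both sides having order $p^{2r}$; and surjectivity comes from the presentation of the exponent-$p$ extraspecial group, where lifting a symplectic map to the generators preserves all defining relations precisely because every lift has order dividing $p$. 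You also correctly isolate the role of the exponent hypothesis (for the exponent-$p^2$ class the quotient is indeed a proper subgroup of $\Sp_{2r}(\F_p)$). The only blemish is cosmetic: with the paper's convention $[x,y]=x^{-1}y^{-1}xy$, the identity you want is $ghg^{-1}=h[h,g^{-1}]$ rather than $h\cdot[h^{-1},g^{-1}]$; the conclusion $ghg^{-1}\in hZ$ is unaffected since the correction term is a commutator and $[P,P]=Z$.
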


\subsection{The main construction.} We now prove Theorem 1.1, the main result of this article. Recall that $G$ is taken to be the normalizer in $\SL(V)$ of an extraspecial group $P$ with a faithful irreducible representation $(\rho,V)$ of degree $p^r$. Note that for $p=5$ and $r=1$, this group $G=P\rtimes \SL_2(\F_5)$ is used in \cite{HM} to construct an indecomposable rank 2 vector bundle on $\mathbb{P}^4$ with 15000 symmetries (the order of $G$).

\begin{proof}[Proof of Theorem 1.1.]

First we relate $G$ to the symplectic group $\text{Sp}_{2r}(\F_p)$ by the means of the following lemma:

\begin{lem} Let $E$ be the centralizer in $\SL(V)$ of $P$, then $G/EP\cong\Sp_{2r}(\F_p).$
\end{lem}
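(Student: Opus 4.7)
The plan is to define the conjugation action $\phi\colon G\to\mathrm{Aut}(P)$, identify its kernel as $E$, show its image is contained in $\tilde G$ and contains $\mathrm{Inn}(P)$, and then verify surjectivity onto $\tilde G/\mathrm{Inn}(P)$ so that Lemma 4.3 finishes the job.

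First I would send $g\in G$ to $\phi(g)\colon x\mapsto gxg^{-1}$, well defined since $G$ normalizes $P$. By construction $\ker\phi=E$. Since $(\rho,V)$ is irreducible, Schur's lemma forces $\rho(z)=\chi(z)I$ for $z\in Z=Z(P)$, where $\chi$ is the (faithful) central character, so any $g\in\SL(V)$ satisfies $g\rho(z)g^{-1}=\chi(z)I=\rho(z)$; thus $\phi(g)$ fixes $Z$ pointwise and $\phi(G)\subseteq\tilde G$. A brief character computation (using that $\chi_\rho$ vanishes off $Z$, so each $p$-th root of unity appears as an eigenvalue of $\rho(x)$ with equal multiplicity $p^{r-1}$) shows $\rho(P)\subseteq\SL(V)$, whence $\mathrm{Inn}(P)=\phi(P)\subseteq\phi(G)$. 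One then checks $\phi^{-1}(\mathrm{Inn}(P))=EP$: if $\phi(g)=\phi(x)$ for some $x\in P$ then $gx^{-1}\in E$. This gives an injection
$$\overline\phi\colon G/EP\hookrightarrow\tilde G/\mathrm{Inn}(P)\cong\Sp_{2r}(\F_p),$$
the last isomorphism coming from Lemma 4.3.

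The main obstacle is surjectivity. Given $\alpha\in\tilde G$, consider the twisted representation $\rho\circ\alpha\colon P\to\GL(V)$: it is irreducible of degree $p^r$, and because $\alpha$ fixes $Z$ pointwise its restriction to $Z$ coincides with $\rho|_Z$. By Lemma 4.2 faithful irreducible representations of $P$ of degree $p^r$ are determined by their restriction to $Z$, so $\rho\circ\alpha\simeq\rho$. Any intertwiner $g_0\in\GL(V)$ satisfies $g_0\rho(x)g_0^{-1}=\rho(\alpha(x))$ for all $x\in P$, and in particular normalizes $\rho(P)$. Rescaling $g_0$ by $\det(g_0)^{-1/p^r}\in\C^\times$ produces an element of $\SL(V)$, and therefore of $G$, whose conjugation action on $P$ is $\alpha$. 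Hence $\overline\phi$ is surjective, proving the lemma.
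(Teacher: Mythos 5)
Your proposal is correct and follows essentially the same route as the paper: identify $G/E$ with the automorphisms of $P$ induced from $\SL(V)$, observe these fix $Z$ pointwise, obtain surjectivity onto $\tilde G$ from the uniqueness (Stone--von Neumann / Lemma 4.2) of the degree-$p^r$ irreducible with a given central character together with a determinant rescaling of the intertwiner, and conclude via Lemma 4.3. You merely spell out details the paper leaves implicit, such as the eigenvalue-multiplicity argument showing $\rho(P)\subseteq\SL(V)$ and the identification $\phi^{-1}(\mathrm{Inn}(P))=EP$.
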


\begin{proof}
The automorphisms Aut$_{\SL(V)}(P)$ of $P$ induced by $\SL(V)$ are given by $G/E$. Let $\tilde{G}$ be the group introduced in Lemma 4.3, i.e., $\tilde{G}$ is the normal subgroup of Aut$(P)$ that acts trivially on $Z$. Then $\tilde{G}\supseteq \text{Aut}_{\SL(V)}(P)$. If $\alpha\in \tilde{G}$ then considering that the irreducible representation of dimension $p^n$ have distinct central characters (by Stone-von Neumann Theorem) it follows that $\rho\cong \rho\circ \alpha$, showing that $\alpha\in \text{Aut}_{\SL(V)}(P)$. Consequently, $\tilde{G}=\text{Aut}_{\SL(V)}(P)$ and Lemma 4.3 assures that $G/EP\cong \tilde{G}/\text{~Inn}(P)\cong \Sp_{2r}(\F_p).$
\end{proof}

We know from Lemma 4.1 that $P/Z$ can be viewed as a $2r$-dimensional symplectic space $W$ over $\F_p$. The corresponding symplectic form $\beta$ gives an isomorphism $\phi:W\to W^{\vee}$ between $W$ and its dual space $W^{\vee}$, sending $$x\mapsto (\phi_x:y\mapsto \beta(x,y)).$$ Since $\Sp_{2r}(\F_p)$ preserves $\beta$, so does $G$ (by Lemma 4.4), which implies that $\phi$ is a $G$-equivariant isomorphism. Furthermore, the group $\Sp_{2r}(\F_p)$ is transitive on $W\setminus \{0\}$, while the dual group $W^{\vee}$ can be identified with the group of characters $\Hom(W,\C)$ of $W$. Therefore, $G$ acts transitively on the set $\Hom^{\star}(W,\C)$ of nontrivial characters of $W$.

Given $x\in W$, consider a lift $\tilde{x}$ to $P$ and put $x\cdot M=\tilde{x}M\tilde{x}^{-1}$, which is independent of the choice of the lift. Since $W$ is an elementary abelian group, we can decompose $\Ad(\rho)$ according to the eigenspaces corresponding to the characters of $W$. For $\chi\in \Hom(W,\C)$ consider the corresponding weight space for $W$ on $\Ad(\rho)$: $$A_{\chi}=\{M\in \Ad(\rho)~:~\tr{M}=0 \text{ and }x\cdot M=\chi(x)M,\forall ~x\in W\}.$$ If $\Lambda=\{\chi\in \Hom(W,\C): A_{\chi}\neq 0\}$ is the set of weights then $\Ad(\rho)$ decomposes as $\Ad(\rho)=\bigoplus_{\chi\in \Lambda} A_{\chi}.$ The group $G$ permutes the weight spaces by $gA_{\chi}=A_{g\chi}$, $g\in G$, $\chi\in \Lambda$. Since $\Ad(\rho)$ does not contain the trivial representation, it follows that $\Lambda\subseteq \Hom^{\star}(W,\C)$. Moreover, since $G$ is transitive on $\Hom^{\star}(W,\C)$ we obtain that $\Lambda=\Hom^{\star}(W,\C)$. The dimensions of the $p^{2r}-1$ weight spaces add up to the dimension of $\Ad(\rho)$, which is $p^{2r}-1$. Therefore, each space $A_{\chi}$ is of dimension 1.

If $W'\subset \Ad(\rho)$ is a $G$-invariant subspace then $W'$ restricted to $W$ is contained in $\oplus_{\chi\in \Lambda}A_{\chi}$, and since each summand is 1-dimensional it follows that $A_{\chi}\subset W'$ for some $\chi$. However, in that case $$\Ad(\rho)=G\cdot A_{\chi}\subset G\cdot W'=W',$$ which is a contradiction. Consequently, $G$ is irreducible on $\Ad(\rho)$.

\end{proof}

\begin{rem} When $p=3$ and $r=1$ the image $\overline{G}$ in $\PGL_3(\C)$ of the group $G$ constructed above is the Hessian group $G_{216}$ found in Proposition 3.2. Indeed, the lemma implies that $G/EP\cong \Sp_2(\F_3)=\SL_2(\F_3).$ Since $|\SL_2(\F_3)|=24$ it follows that $|G|=24|EP|$, and $27=|P|$ divides $EP$. Consequently, 81 divides the order of $G$ and thus 27 divides the order of $\overline{G}$. From the four possible choices only $G_{216}$ satisfies this property.
\end{rem}

\section{An application.} We start with a brief discussion on the Artin $L$-functions before proving Proposition 1.2.

Let $F$ be a number field and let $\rho:G_{F}\to GL(V)$ be a continuous representation of the absolute Galois group $G_{F}=\Gal(\overline{F}/F)$ on a $n$-dimensional $\C$-vector space $V$. The continuity implies that $\rho$ has finite image, so it factors through the projection $\Gal(\overline{F}/F) \to \Gal(K/F)$, for some finite extension $K/F$. The Artin L-function attached to $\rho$ is the Euler product of local factors $L(\rho,s)=\prod_{v} L_v(\rho,s),$ defined as $$L_v(\rho,s)=\det([1-\rho(\Frob_v)q_v^{-s}]|_{V^{I_v}})^{-1},$$ where $\Frob_v\in \Gal(\overline{F_v}/F_v)$ is a Frobenius element at a place $v$, $V^{I_v}$ is the subspace of $V$ fixed by the inertia group $I_v$ at $v$, and $q_v$ is the order of the residue field of $F_v$.

Let $S$ be the finite set of places outside which $\rho$ is unramfied. Then for $v\notin S$, since the Frobenius conjugacy class $\{\rho(\Frob_v)\}$ is semisimple, the eigenvalues $\alpha_{1,v},\dots,\alpha_{n,v}$ of the corresponding linear transformation are roots of unity. Accordingly, the local factor can be written as $$L_v(\rho,s)=\prod_{j=1}^n(1-\alpha_{j,v}q_v^{-s})^{-1}.$$
Denote by $a_v$ the trace of the Frobenius at $v$, i.e., $a_v=\sum_{j=1}^n \alpha_{j,v}.$
Notice that $$\log L_v(\rho,s)=\frac{a_v}{q_v^s}+\sum_{m\geq 2}\frac{\sum_{j=1}^n \alpha_{j,v}^m}{q_v^{ms}}.$$
Using the fact that $|\alpha_{j,v}|=1$ we obtain for real $s>1$ $$\sum_{m\geq 2}\frac{|\sum_{j=1}^n \alpha_{j,v}^m|}{q_v^{ms}}\leq n\sum_{m\geq 2} \frac{1}{q_v^{ms}},$$ which implies that $$\log L(\rho,s) \sim \sum_{v\notin \Sigma}\frac{a_v}{q_v^s} \text{ as } s\to 1^{+},$$ where the relation $\sim$ means that the two sides agree up to a function of $s$ which is $o(\log(\frac{1}{s-1}))$, and $\Sigma=\{v\text{ finite }|~a_v=0\}$.

\begin{proof}[Proof of Proposition 1.2] Consider the Artin L-function $L(\rho,s)$ attached to a continuous irreducible $n$-dimensional representation $(\rho,V)$ of the absolute Galois group $G_{F}$, and let $\Sigma$ as defined above.

The order of the pole at $s=1$ of $L(\rho,s)$ is the multiplicity of the trivial representation in $\rho$. Therefore, if $\rho$ is irreducible then $-ord_{s=1}L(\rho\otimes \rho^{\vee},s)=1$.
Since $\rho\otimes \rho^{\vee}=\textbf{1}\oplus \Ad(\rho)$ we have
$$\rho\otimes \rho^{\vee}\otimes (\rho\otimes \rho^{\vee})^{\vee}=\textbf{1}\oplus2\Ad(\rho)\oplus\Ad(\rho)^{\otimes 2}.$$ The key observation is that the trivial representation is contained in $\Ad(\rho)^{\otimes 2}$ with multiplicity $\sum_{i=1}^Nm_i^2$, therefore $$-ord_{s=1}L(\rho\otimes \rho^{\vee}\otimes \rho \otimes \rho^{\vee},s)=1+\sum_{i=1}^Nm_i^2.$$ Since $\rho$ is a $\C$-representation of a finite group, the dual representation $\rho^{\vee}$ is isomorphic to the complex conjugate representation $\overline{\rho}$, and so the Frobenius trace of $v$ on $\rho\otimes \rho^{\vee}$ is given by $a_v\cdot \overline{a}_v=|a_v|^2$. Therefore we get
$$\log L(\rho\otimes \rho^{\vee},s)\sim \sum_{p\notin \Sigma}\frac{|a_v|^2}{q_v^s}\sim\log\Big(\frac{1}{s-1}\Big)$$ and similarly $$\log L(\rho\otimes \rho^{\vee}\otimes (\rho\otimes \rho^{\vee})^{\vee},s)\sim \sum_{p\notin \Sigma}\frac{|a_v|^4}{q_v^s}\sim \Big(1+\sum_{i=1}^Nm_i^2\Big)\log\Big(\frac{1}{s-1}\Big).$$ Consider the sequence $\{b_v\}$, with $b_v=1$ if $v\notin \Sigma$ and $b_v=0$, otherwise. By the Cauchy-Schwarz inequality, it follows that $$\sum_{v\notin \Sigma}\frac{|a_v|^2}{q_v^s}=\sum_{v}\frac{|a_v^2b_v|}{q_v^s}\leq \Bigg(\sum_v\frac{|a_v|^4}{q_v^s}\Bigg)^{1/2}\Bigg(\sum_v \frac{b_v}{q_v^s}\Bigg)^{1/2}.$$ If $\delta(\Sigma)$ is the density of the set $\Sigma$ then by construction $$\sum_v \frac{b_v}{q_v^s}\sim (1-\delta(\Sigma))\log\Big(\frac{1}{s-1}\Big)$$ and so the previous inequality reads as $1\leq (1+\sum_{i=1}^Nm_i^2)(1-\delta(\Sigma))$, yielding $$\delta(\Sigma)\leq 1-\frac{1}{1+\sum_{i=1}^Nm_i^2}.$$ As a consequence, it follows that when $\Ad$ is irreducible (i.e., $N=1$ and $m_1=1$) the traces of Frobenius classes in finite Galois groups are nonzero for at least half of the primes.
\end{proof}

\subsection{Some examples}
As previously seen in section 2, the irreducibility of $\Ad(\rho)$ forced $\rho$ to be non essentially self-dual in dimension $n\geq 3$. Since the density estimate in Proposition 1.2 depends only on the multiplicity of the irreducible constituents of $\Ad(\rho)$, one could also consider self-dual representations $\rho$ for which $\Ad(\rho)$ has only two inequivalent simple components (and thus $\delta(\Sigma)\leq 2/3$ by Proposition 1.2). We provide two examples of such representations.

\begin{ex}\emph{(}based on \emph{\cite{DR})} Take $G=\SL_2(\F_5)$ and let $\pi:G\to \GL_2(\C)$ be an irreducible 2-dimensional representation. By Proposition 3.1, since the projective image of $G$ in $\PGL_2(\C)$ is $A_5$, we infer that $\Ad(\pi)$ is an irreducible 3-dimensional representation. If $\rho=\Ad(\pi)$ then using the fact that $\Ad(\pi)=\Sym^2(\pi)\otimes \det(\pi)^{-1}$ it follows that $$\Sym^2(\rho)=\Sym^4(\pi)\otimes \det(\pi)^{-2}\oplus \textbf{\emph{1}}.$$ Now, $\Ad(\rho)$ is the sum of $\rho$ and $\Sym^4(\pi)\otimes \det(\pi)^{-2}$, which are both irreducible.
\end{ex}

\begin{ex} Take $G=\SL_2(\F_9)$ (the double cover of $A_6$) and let $\rho:G\to \GL_4(\C)$ be an irreducible 4-dimensional representation. Using \emph{\cite{MM}} (Proposition 3.1) we find that $\Sym^2(\rho)$ is an irreducible 10-dimensional representation. Furthermore, $\Lambda^2(\rho)$ contains the trivial representation with multiplicity one, so we can write $\Lambda^2(\rho)=\emph{\textbf{1}}\oplus \tau$, where $\tau$ is a 5-dimensional representation that factors through $A_6$. By the character table of $\SL_2(\F_9)$ it has one 1-dimensional, two 4-dimensional, two 5-dimensional, four 8-dimensional, one 9-dimensional and three 10-dimensional irreducible representations. Thus, $\tau$ must be irreducible and $\Ad(\rho)$ is the sum of $\tau$ and $\Sym^2(\rho)$.
\end{ex}

\smallskip

After completing this manuscript, the author learned of a paper of Guralnick and Tiep \cite{Gur}, which contains Theorem 1.1, albeit arranged in a different way. Our focus is on the application (Proposition 1.2) to lacunarity.

\bigskip

\centerline{\Large{\textbf{Acknowledgements}}}

\smallskip

\noindent We would like to thank Dinakar Ramakrishnan for suggesting the problem and for his invaluable help and encouragement. We are grateful to Michael Aschbacher for outlining the construction that appears in Section 4 and also for his useful comments and suggestions. We would like to acknowledge helpful conversations with Iurie Boreico, Hadi Hedayatzadeh and Andrei Jorza.

\bigskip

\noindent \sc{Liubomir Chiriac}\\
\sc{California Institute of Technology},\\ 
Department of Mathematics, MC 253-37,\\ 
Pasadena, CA 91125\\
lchiriac@caltech.edu

\end{document}